\title{\bf Discrete Quantitative Nodal Theorem}
\author{L\'aszl\'o Lov\'asz\\
Alfr\'ed R\'enyi Institute of Mathematics\footnote{Research supported by ERC
Synergy Grant No.~810115.}}
\newtheorem{theorem}{Theorem}
\newtheorem{corollary}[theorem]{Corollary}
\newtheorem{example}{Example}
\newenvironment{proof}{\medskip\noindent{\bf Proof. }}{\hfill$\square$\medskip}
\newenvironment{proof*}[1]{\medskip\noindent{\bf Proof of #1.}}{\hfill$\square$\medskip}
\long\def\ignore#1{}
\begin{document}

\def\Pr{{\sf P}}
\def\E{{\sf E}}
\def\T{^\top}
\def\Var{{\sf Var}}
\def\eps{\varepsilon}
\def\wt{\widetilde}
\def\wh{\widehat}
\def\tr{\text{\rm tr}}
\def\supp{\text{\rm supp}}
\def\diag{\text{\rm diag}}

\def\AA{\mathcal{A}}\def\BB{\mathcal{B}}\def\CC{\mathcal{C}}
\def\DD{\mathcal{D}}\def\EE{\mathcal{E}}\def\FF{\mathcal{F}}
\def\GG{\mathcal{G}}\def\HH{\mathcal{H}}\def\II{\mathcal{I}}
\def\JJ{\mathcal{J}}\def\KK{\mathcal{K}}\def\LL{\mathcal{L}}
\def\MM{\mathcal{M}}\def\NN{\mathcal{N}}\def\OO{\mathcal{O}}
\def\PP{\mathcal{P}}\def\QQ{\mathcal{Q}}\def\RR{\mathcal{R}}
\def\SS{\mathcal{S}}\def\TT{\mathcal{T}}\def\UU{\mathcal{U}}
\def\VV{\mathcal{V}}\def\WW{\mathcal{W}}\def\XX{\mathcal{X}}
\def\YY{\mathcal{Y}}\def\ZZ{\mathcal{Z}}

\def\Ab{\mathbf{A}}\def\Bb{\mathbf{B}}\def\Cb{\mathbf{C}}
\def\Db{\mathbf{D}}\def\Eb{\mathbf{E}}\def\Fb{\mathbf{F}}
\def\Gb{\mathbf{G}}\def\Hb{\mathbf{H}}\def\Ib{\mathbf{I}}
\def\Jb{\mathbf{J}}\def\Kb{\mathbf{K}}\def\Lb{\mathbf{L}}
\def\Mb{\mathbf{M}}\def\Nb{\mathbf{N}}\def\Ob{\mathbf{O}}
\def\Pb{\mathbf{P}}\def\Qb{\mathbf{Q}}\def\Rb{\mathbf{R}}
\def\Sb{\mathbf{S}}\def\Tb{\mathbf{T}}\def\Ub{\mathbf{U}}
\def\Vb{\mathbf{V}}\def\Wb{\mathbf{W}}\def\Xb{\mathbf{X}}
\def\Yb{\mathbf{Y}}\def\Zb{\mathbf{Z}}

\def\ab{\mathbf{a}}\def\bb{\mathbf{b}}\def\cb{\mathbf{c}}
\def\db{\mathbf{d}}\def\eb{\mathbf{e}}\def\fb{\mathbf{f}}
\def\gb{\mathbf{g}}\def\hb{\mathbf{h}}\def\ib{\mathbf{i}}
\def\jb{\mathbf{j}}\def\kb{\mathbf{k}}\def\lb{\mathbf{l}}
\def\mb{\mathbf{m}}\def\nb{\mathbf{n}}\def\ob{\mathbf{o}}
\def\pb{\mathbf{p}}\def\qb{\mathbf{q}}\def\rb{\mathbf{r}}
\def\sb{\mathbf{s}}\def\tb{\mathbf{t}}\def\ub{\mathbf{u}}
\def\vb{\mathbf{v}}\def\wb{\mathbf{w}}\def\xb{\mathbf{x}}
\def\yb{\mathbf{y}}\def\zb{\mathbf{z}}

\def\Abb{\mathbb{A}}\def\Bbb{\mathbb{B}}\def\Cbb{\mathbb{C}}
\def\Dbb{\mathbb{D}}\def\Ebb{\mathbb{E}}\def\Fbb{\mathbb{F}}
\def\Gbb{\mathbb{G}}\def\Hbb{\mathbb{H}}\def\Ibb{\mathbb{I}}
\def\Jbb{\mathbb{J}}\def\Kbb{\mathbb{K}}\def\Lbb{\mathbb{L}}
\def\Mbb{\mathbb{M}}\def\Nbb{\mathbb{N}}\def\Obb{\mathbb{O}}
\def\Pbb{\mathbb{P}}\def\Qbb{\mathbb{Q}}\def\Rbb{\mathbb{R}}
\def\Sbb{\mathbb{S}}\def\Tbb{\mathbb{T}}\def\Ubb{\mathbb{U}}
\def\Vbb{\mathbb{V}}\def\Wbb{\mathbb{W}}\def\Xbb{\mathbb{X}}
\def\Ybb{\mathbb{Y}}\def\Zbb{\mathbb{Z}}

\def\R{{\mathbb R}}
\def\Q{{\mathbb Q}}
\def\Z{{\mathbb Z}}
\def\N{{\mathbb N}}
\def\C{{\mathbb C}}
\def\U{{\mathbb U}}
\def\Ge{{\mathbb G}}
\def\Ha{{\mathbb H}}

\def\lunl{[\hskip-1pt[}
\def\runl{]\hskip-1pt]}
\def\one{{\mathbbm1}}

\def\sbd#1{{#1}^\text{\rm sub}}

\maketitle

\begin{abstract}
We prove a theorem that can be thought of as a common generalization of the
Discrete Nodal Theorem and (one direction of) Cheeger's Inequality for graphs.
A special case of this result will assert that if the second and third
eigenvalues of the Laplacian are at least $\eps$ apart, then the subgraphs
induced by the positive and negative supports of the eigenvector belonging to
$\lambda_2$ are not only connected, but edge-expanders (in a weighted sense,
with expansion depending on $\eps$).
\end{abstract}

\section{Introduction}

In the theory of Riemannian manifolds, two basic theorems connect the geometry
of the manifold to the spectrum of the Laplace operator on the manifold:
Courant's Nodal Theorem and Cheeger's Inequality. Both of these have analogues
in graph theory.

It is a basic simple fact that the graph is connected if and only if the
smallest eigenvalue of its (combinatorial) Laplacian (which is always $0$) has
multiplicity one. Discrete Cheeger inequalities (Alon and Milman \cite{AlMi},
Alon \cite{Al}, Dodziuk and Kendall \cite{DoKe}, Jerrum and Sinclair
\cite{JeSi}) give a quantitative version of this: Roughly speaking, a graph is
an expander if and only if the second smallest eigenvalue of its Laplacian is
bounded away from zero.

The simplest version of the Discrete Nodal Theorem asserts that if $x$ is an
eigenvector of the Laplacian of a connected graph $G$ belonging to the second
smallest eigenvalue, and this eigenvalue has multiplicity one, then the
positive and negative supports of $x$ induce connected subgraphs (Fiedler
\cite{Fied}). If the second smallest eigenvalue has higher multiplicity, there
are exceptions (a simple example is the 3-star), but they can be characterized
\cite{HvH,HLS}.

The Discrete Nodal Theorem was extended to higher eigenvectors by Fiedler
\cite{Fied}, Colin de Verdi\`ere \cite{CdV93}, Davies, Gladwell, Leytold and
Stadler \cite{DavGL}, and Duval and Reiner \cite{DuRe}; to embedded graphs by
Lin, Lippner, Mangoubi and Yau \cite{LLMY}; see also B\i y\i ko$\breve{\rm
g}$lu, Leydold and Stadler \cite{BLS}. To motivate our results, let us quote a
simple version. A {\it nodal domain} of a vector $v\in\R^V$ is a connected
component of the subgraph induced by its positive support, or a connected
component of the subgraph induced by its negative support.

\begin{theorem}
Let $G$ be a connected graph, let $\lambda_1=0\le\lambda_2\le \lambda_3\le
\dots\le\lambda_n$ be the eigenvalues of its Laplacian, and assume that
$\lambda_k<\lambda_{k+1}$ for some $k\ge2$. Let $y$ be an eigenvector belonging
to $\lambda_k$. Then the number of nodal domains of $y$ is at most $k$.
\end{theorem}

There are conditions other than $\lambda_k<\lambda_{k+1}$ to guarantee that the
number of nodal domains is at most $k$, for example, that $y$ has minimal
support among all eigenvectors belonging to $\lambda_k$.

In this note we prove a theorem that can be thought of as a common
generalization of the Discrete Nodal Theorem and (one direction of) Cheeger's
Inequality for graphs. A special case of our results will assert that if the
second and third eigenvalues of the Laplacian are at least $\eps$ apart, then
the subgraphs induced by the positive and negative supports of the eigenvector
belonging to $\lambda_2$ are not only connected, but edge-expanders (in a
weighted sense, with an expansion rate depending on $\eps$).

\section{The main result}

Let $(w_i:~i\in V)$ be a weighting of the nodes of the graph $G=(V,E)$ with
nonnegative weights. For $S\subseteq V$, set $w(S) = \sum_{i\in S} w_i$, and
$\nabla(S)=\{ij\in E:~i\in S,j\in V\setminus S\}$. If $w(S)>0$, we define the
{\it edge-expansion} of $S$ as
\[
\Psi_w(S) = \Psi_{G,w}(S) = \frac1{w(S)}\sum_{ij\in\nabla(S)} \sqrt{w_iw_j}\,.
\]
We say that $G$ is a {\it $c$-expander with respect to $w$} $(c>0)$, if
$\Psi_w(S)\ge c$ for every subset $S\subseteq V$ with $0<w(S)<w(V)/2$.

To generalize this notion to multiway cuts, it is easier to formulate the
contrapositive. We say that $(G,w)$ is {\it $(k,c)$-partitionable}, if $V$ has
a partition $V=S_1\cup\dots\cup S_k$ into sets with $w(S_i)>0$ such that
$\Psi_w(S_i) < c$ for all $1\le i\le k$. It is easy to check that if $k>2$,
then merging two classes in such a partition, the new class $S_i\cup S_j$
satisfies $\Psi_w(S_i\cup S_j) < c$. Hence every $(k,c)$-partitionable weighted
graph is $(k-1,c)$-partitionable as well.

For a vector $x\in\R^V$, we denote its positive and negative support by
$\supp_+(x)=\{i\in V:~x_i>0\}$ and $\supp_-(x)=\{i\in V:~x_i<0\}$, and by
$G_x^+$ and $G_x^-$, the subgraphs of $G$ induced by $\supp_+(x)$ and
$\supp_-(x)$ respectively.

\begin{theorem}\label{THM:1}
Let $\lambda_1=0\le\lambda_2\le \lambda_3\le \dots\le\lambda_n$ be the
eigenvalues of the Laplacian $L$ of a graph $G=(V,E)$. Let $y$ be an
eigenvector belonging to $\lambda_k$ $(1\le k\le n)$, and set $w_i=y_i^2$ and
$c=(\lambda_{k+1}-\lambda_k)/2$. Suppose that the weighted graph $(G_y^+, w)$
is $(a,c)$-partitionable, and $(G_y^-, w)$ is $(b,c)$-partitionable. Then
$a+b\le k$.
\end{theorem}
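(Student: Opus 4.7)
The plan is via the Courant--Fischer min-max principle: I will exhibit an $(a+b)$-dimensional subspace $W\subseteq\R^V$ on which every nonzero $z$ has Rayleigh quotient $z\T L z/z\T z<\lambda_{k+1}$, which forces $\dim W\le k$ and hence $a+b\le k$. (The degenerate case $\lambda_{k+1}=\lambda_k$ is vacuous: then $c=0$, no subset satisfies $\Phi_w(S)<0$, and $a=b=0$.) For the test vectors I take the pieces of $|y|$: given the partitions $\supp_+(y)=S_1\sqcup\cdots\sqcup S_a$ and $\supp_-(y)=T_1\sqcup\cdots\sqcup T_b$ supplied by the hypothesis, let $\chi_r\in\R^V$ be $|y|$ restricted to the $r$-th piece $X_r$. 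These $a+b$ vectors have pairwise disjoint supports, hence are linearly independent, and any $z\in W:=\text{span}\{\chi_r\}$ can be written as $z=f\cdot|y|$ (entrywise product) with $f$ equal to some constant $\gamma_r$ on each $X_r$ and equal to $0$ on $\supp_0(y):=V\setminus(\supp_+\cup\supp_-)$.

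The computational heart of the argument is the discrete product-rule identity
\[
z\T L z \;=\; f\T\hat L f \;+\; \sum_{i\in V}f_i^{\,2}\,|y_i|\,(L|y|)_i,
\]
where $\hat L$ denotes the weighted Laplacian on $G$ with edge weights $|y_iy_j|$; it follows from a direct expansion of $\sum_{ij\in E}(f_i|y_i|-f_j|y_j|)^2$. The second summand is explicit from $Ly=\lambda_k y$: on $\supp_+$ one has $(L|y|)_i=\lambda_k|y_i|-2\sum_{j\sim i,\,j\in\supp_-}|y_j|$, symmetrically on $\supp_-$, while on $\supp_0$ the factor $|y_i|$ kills the term. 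Writing $z=\sum_p\alpha_p\phi_p+\sum_q\beta_q\psi_q$ in the basis $\phi_p,\psi_q$ of $y$ restricted to $S_p,T_q$ (so $f=\alpha_p$ on $S_p$ and $f=-\beta_q$ on $T_q$), this and the direct computation $f\T\hat L f=\sum_{r<s}A_{rs}(\gamma_r-\gamma_s)^2$ with $A_{rs}=\sum_{ij\in E(X_r,X_s)}|y_iy_j|$ combine, via the algebraic identity $(a+b)^2-2(a^2+b^2)=-(a-b)^2$, to yield the striking cancellation
\[
z\T L z-\lambda_k z\T z=\sum_{p<p'}A^{++}_{pp'}(\alpha_p-\alpha_{p'})^2+\sum_{q<q'}A^{--}_{qq'}(\beta_q-\beta_{q'})^2-\sum_{p,q}A^{+-}_{pq}(\alpha_p-\beta_q)^2,
\]
where the superscripts indicate whether the crossed pieces sit in $\supp_+$ or $\supp_-$.

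The third (cross-sign) sum is nonpositive, so I drop it. In the first, $(\alpha_p-\alpha_{p'})^2\le2(\alpha_p^2+\alpha_{p'}^2)$ gives $\sum_{p<p'}A^{++}_{pp'}(\alpha_p-\alpha_{p'})^2\le 2\sum_p\alpha_p^2\,A^{++}_{p\cdot}$, where $A^{++}_{p\cdot}:=\sum_{p'\ne p}A^{++}_{pp'}$ is precisely the numerator of $\Phi_{G_y^+,w}(S_p)$. By $(a,c)$-partitionability this is strictly less than $c\min(w(S_p),w(\supp_+\setminus S_p))\le c\,w(S_p)$, so the first sum is strictly less than $2c\sum_p\alpha_p^2 w(S_p)$; the $\beta$-sum is handled identically. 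Since $z\T z=\sum_p\alpha_p^2 w(S_p)+\sum_q\beta_q^2 w(T_q)$ and $\lambda_k+2c=\lambda_{k+1}$, we obtain $z\T L z<\lambda_{k+1}\,z\T z$ for every nonzero $z\in W$, and min-max finishes the proof.

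The main obstacle, I expect, is the opposite-sign edge contributions: partitionability bounds only edges within $G_y^+$ and within $G_y^-$, so a naive calculation with the test vectors leaves uncontrolled $|y_iy_j|$-terms on edges between $\supp_+$ and $\supp_-$. The trick is to discover the cancellation identity above; the appearance of $L|y|$ through the product-rule identity supplies exactly the negative cross-sign contribution needed, reducing the problem to same-sign boundary terms that the Cheeger-type hypothesis directly controls.
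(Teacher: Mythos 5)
Your proof is correct, and it reaches the conclusion by a route that looks quite different on the surface from the paper's, although the two arguments are at bottom the same computation viewed from two angles. Both proofs test the form $M=L-\lambda_k I$ on the same $(a+b)$-dimensional subspace spanned by the restrictions of $y$ to the partition classes, and both finish with a variational principle (your Courant--Fischer step is interchangeable with the paper's use of eigenvalue interlacing for the compression of $M$ to that subspace). The difference is in how the form is bounded there. The paper never expands anything over edges: it forms the $(k+1)\times(k+1)$ matrix $B_{ij}=\langle \wh{y}^i,M\wh{y}^j\rangle$, records only the signs of its off-diagonal entries and the relation $Bz=0$, and then compares $B$ with an explicit matrix $C$ for which $C-B$ is positive semidefinite by diagonal dominance and $\lambda_{\max}(C)\le 2\max_i C_{ii}<2c$. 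Your discrete ground-state transform $z=f\cdot|y|$ instead computes the compressed form in closed form, producing the signed-Laplacian identity in which cross-sign edges enter with coefficient $-(\alpha_p-\beta_q)^2$; dropping these and applying $(s-t)^2\le 2(s^2+t^2)$ is precisely the content of ``$C-B\succeq 0$'' and of the paper's max-coordinate eigenvector argument for $\lambda_{\max}(C)$, and your quantities $A^{++}_{p\cdot}/w(S_p)$ are the paper's diagonal entries $C_{ii}$. What your version buys is transparency (the cross-sign cancellation is visible rather than hidden in matrix positivity) and the fact that it proves Theorem~\ref{THM:1} in full generality directly, whereas the paper writes out only the $k=2$ corollary; what the paper's version buys is that Proposition~\ref{PROP:SUM} drops out immediately from $\tr(C)$. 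Two small points you should make explicit: edges inside a class and edges meeting $\supp_0(y)$ contribute nothing to $f\T\wh{L}f$ (the weight $|y_iy_j|$ vanishes or $f$ is constant), and when a class has empty boundary in its own sign class (e.g.\ $a=1$, where $\Phi^+$ of the whole support is not defined) the required strict inequality comes from $c>0$ and $w(S_p)>0$ rather than from the partitionability hypothesis; both are trivial to patch and do not affect the argument.
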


\begin{proof}
Let us write $G^+=G^+_y$, $V^+=\supp_+(y)$ and $\Psi^+(S)=\Psi_{G^+,w}(S)$ for
$S\subseteq V^+$, and define $G^-$, $V^-$ and $\Psi^-$ analogously. Note that
$\Psi^+(S)=0$ is $S$ induces a connected component of $G^+$. Let
$V^+=V_1\cup\dots\cup V_a$ be a partition with $\Psi^+(V_i) < c$, and let
$V^-=V_{a+1}\cup\dots\cup V_{a+b}$ be an analogous partition. Let us assume (by
way of contradiction) that $a+b\ge k+1$; we may assume (by merging partition
classes) that $a+b=k+1$.

Let $M=L-\lambda_kI$, so that $My=0$. Let $y^i\in \R^V$ denote the vector
obtained from $y$ by replacing all entries in $V\setminus V_i$ by $0$. This
vector is nonzero, because $|y^i|^2 = w(V_i)>0$ by the definition of
$(k,c)$-partitionable graphs. Furthermore, $y^i\ge0$ for $1\le i\le a$,
$y^i\le0$ for $a+1\le i\le a+b$, and $y=y^1+\dots+y^{k+1}$. Let $z_i=|y^i|$,
$\wh{y}^i=(1/z_i)y^i$, and $z=(z_1,\dots,z_{k+1})\T$. Consider the
$(k+1)\times(k+1)$ matrix $B$ defined by
\[
B_{ij}=\langle \wh{y}^i, M\wh{y}^j\rangle = \frac1{z_iz_j}\langle y^i, My^j\rangle,
\]
and let $\mu_1\le\dots\le\mu_{k+1}$ be its eigenvalues. Note that for $i\le a$,
we have
\begin{equation}\label{EQ:PSI-PLUS}
\Psi^+(V_i) = \frac1{z_i^2}\sum_{u\in V_i}\sum_{v\in V_+\setminus V_i} y_uy_v
= -\frac1{z_i^2}\sum_{j\le a, j\not=i} \langle y^i, My^j\rangle =-\frac{1}{z_i}\sum_{j\le a, j\not=i}B_{ij}z_j.
\end{equation}
Analogous formula holds for $i>a$.

Let us start with some elementary properties of $B$. We have $\langle y^i, My^j
\rangle\le 0$ if $i\not=j$ and $1\le i,j\le a$, which implies that $B_{ij}\le0$
in this case. Similarly $B_{ij}\le0$ if $i\not=j$ and $a+1\le i,j\le k+1$, and
$B_{ij}\ge0$ if $i\le a<j$, or the other way around. Furthermore, we have
$Bz=0$; indeed,
\[
(Bz)_i = \sum_{j=1}^{k+1} \frac1{z_i}\langle y^i, My^j\rangle =
\frac1{z_i}\langle y^i, My\rangle =0.
\]

Since the vectors $\wh{y}^i$ ($i=1,\dots, k+1$) are mutually orthogonal unit
vectors, the matrix $B$ is a principal submatrix of $M$ in an appropriate
orthonormal basis. By the Interlacing Eigenvalues Theorem, we have
\begin{equation}\label{EQ:X}
\lambda_i-\lambda_k\le \mu_i \qquad(i=1,\dots, k+1).
\end{equation}

Let $C$ be a symmetric $(k+1)\times(k+1)$ matrix, given by
\[
C_{ij}=
  \begin{cases}
   B_{ij}  & \text{if $i\not=j$ and either $i,j\le a$ or $i,j\ge a+1$}, \\
   \Psi^+(V_i), & \text{if $i=j\le a$}, \\
   \Psi^-(V_i), & \text{if $i=j\ge a+1$}, \\
   0,  & \text{otherwise}.
  \end{cases}
\]
Then for $1\le i\le a$, using \eqref{EQ:PSI-PLUS},
\begin{align*}
(Cz)_i &= \Psi^+(V_i) z_i + \sum_{j\le a, j\not=i} B_{ij} z_j = 0.
\end{align*}
Similar computation works for $i>a$, to get $Cz=0$.

\medskip

\noindent{\bf Claim.} {\it The matrices $C$ and $C-B$ are positive
semidefinite.}

\medskip

Indeed, from our discussion of the signs of the entries of $B$, it follows that
the off-diagonal entries of $C$ and of $C-B$ are nonpositive. We also have
$(C-B)z=Cz=Bz=0$, which implies that their diagonal entries are nonnegative.
Let $D=\diag(z)$, then the matrix $DCD$ has nonnegative entries in the
diagonal, nonpositive entries everywhere else, and every row-sum is $0$. So
this matrix is diagonally dominant, and hence positive semidefinite, which
implies that $C$ is positive semidefinite. For the matrix $C-B$ the conclusion
follows similarly.

Next, we show that the largest eigenvalue of the matrix $C$ satisfies
\begin{equation}\label{EQ:X1}
\lambda_{\max}(C)<2c.
\end{equation}
Indeed, let $u$ be the eigenvector of $C$ belonging to $\lambda_{\max}(C)$. We
may assume that $u_1=z_1>0$ and $|u_i|\le z_i$ for all $i$. Then, using that
$C_{1i}\le 0$ for $i\not=1$, we get
\[
\lambda_{\max}(C) u_1 = \sum_i C_{1i} u_i \le
C_{11}u_1 - \sum_{i>1} C_{1i}z_i =2C_{11}u_1<2cu_1.
\]
This proves \eqref{EQ:X1}.

Positive semidefiniteness of $C-B$ implies that $\mu_{k+1}$, the largest
eigenvalue of $B$, is bounded above by $\lambda_{\max}(C)$. Hence by
\eqref{EQ:X},
\begin{equation}\label{EQ:X0}
\lambda_{k+1}-\lambda_k \le \mu_{k+1} \le \lambda_{\max}(C)<2c,
\end{equation}
which contradicts the choice of $c$.
\end{proof}

The case $k=2$ is worth stating separately:

\begin{corollary}\label{COR:1}
If $y$ is an eigenvector belonging to $\lambda_2$, then both $G_y^+$ and
$G_y^-$ are $(\lambda_3-\lambda_2)/2$-expanders with respect to the weights
$y_i^2$.
\end{corollary}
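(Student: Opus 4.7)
The plan is to derive Corollary~\ref{COR:1} as a direct specialization of Theorem~\ref{THM:1} to $k=2$, via contraposition.

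The key observation is that failure of the $c$-expander property is exactly $(2,c)$-partitionability: any $S$ witnessing $\Phi_w(S)<c$ with $0<w(S)<w(V)$ yields the two-part partition $\{S,V\setminus S\}$, both parts of positive weight, with $\Phi_w(V\setminus S)=\Phi_w(S)<c$ because the ratio defining $\Phi_w$ is symmetric in a set and its complement; conversely, any $(2,c)$-partition produces such a witness $S$. Moreover, every nonempty weighted graph $(H,w)$ is trivially $(1,c)$-partitionable by taking its whole vertex set as a single class, since the constraint $\Phi_w(S_i)<c$ is imposed only for proper nonempty subsets.

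With these observations, set $c=(\lambda_3-\lambda_2)/2$ and suppose for contradiction that $G_y^+$ is not a $c$-expander with respect to $w$. Then $(G_y^+,w)$ is $(2,c)$-partitionable while $(G_y^-,w)$ is $(1,c)$-partitionable, so Theorem~\ref{THM:1} applied with $a=2$ and $b=1$ would force $3\le 2$, a contradiction. Hence $G_y^+$ is a $c$-expander, and replacing $y$ by $-y$ throughout yields the same conclusion for $G_y^-$.

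I do not anticipate any substantive obstacle, since the corollary is essentially a restatement of Theorem~\ref{THM:1} for $k=2$; the argument is pure bookkeeping in the definitions. The one minor point to verify is the hypothesis that $\supp_-(y)$ be nonempty so that $b\ge 1$. In the case of principal interest where $G$ is connected this is automatic: $y$ is then orthogonal to the unique $\lambda_1$-eigenvector $\one$, hence $\sum_i y_i=0$, and so $y$ takes both signs.
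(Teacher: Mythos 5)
Your proof is correct and takes essentially the same route as the paper, which likewise obtains the corollary by specializing Theorem~\ref{THM:1} to $k=2$ (the proof displayed after the corollary in the paper is really the proof of the theorem itself). Your explicit bookkeeping --- failure of $c$-expansion being equivalent to $(2,c)$-partitionability, the trivial $(1,c)$-partitionability of the other side, and the observation that $\supp_-(y)\neq\emptyset$ requires $G$ connected --- only makes precise what the paper leaves implicit.
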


From our considerations, we can derive two other inequalities:

\begin{corollary}\label{COR:2}
If $y$ is an eigenvector belonging to $\lambda_k$, $a+b=k+1$,
$\{U_1,\dots,U_a\}$ is a partition of $\supp_+(y)$ and $\{V_1,\dots,V_b\}$ is a
partition of $\supp_-(y)$ with $w(U_i),w(V_i)>0$, then
\begin{equation*}
\lambda_{k+1}-\lambda_k\le 2\max\big\{\Psi^+(U_1),\dots,\Psi^+(U_a),
\Psi^-(V_1),\dots,\Psi^-(V_b)\big\}.
\end{equation*}
and
\[
\lambda_{k+1}-\lambda_k\le \sum_{i=1}^a \Psi^+(U_i) +
\sum_{i=1}^b \Psi^-(V_i).
\]
\end{corollary}

(The second inequality is stronger in those cases only when the expansions of
the sets are very different)

\begin{proof}
The first inequality is an easy rephrasing of Theorem \ref{THM:1}. To prove the
second, it suffices to notice that the matrix $C$ in the proof above is
positive semidefinite. Hence its largest eigenvalue is bounded above by its
trace. Since $C-B$ is positive semidefinite, it follows that
\[
\mu_{a+b}\le \tr(C)=\sum_i C_{ii} = \sum_{i=1}^a \Psi^+(U_i) +
\sum_{i=1}^b \Psi^-(V_i).
\]
\end{proof}

\section{Examples}

There is no easy converse to Corollary \ref{COR:1}. The following example shows
that even if $\lambda_2=\lambda_3$, no separation property for the positive and
negative supports of any eigenvector belonging to $\lambda_2$ follows.

\begin{example}\label{EXA:CYCLIC}
Let $C_n$ denote the cycle of length $n$, with its nodes labeled
$0,1,\dots,n-1$ in the natural order. Let $C_n^k$ denote the graph obtained
from $C_n$ by connecting any two nodes at distance of at most $k$ along the
cycle. The eigenvalues of the adjacency matrix are $\mu_r= 2\sum_{h=1}^k
\cos(hr\pi i/n)$, $r=0,1,\dots n-1$. The smallest eigenvalue of the Laplacian
is $\lambda_1=2-\mu_0=0$, and it is not hard to see that the second smallest
eigenvalues are $\lambda_2 =\lambda_3=2-\mu_1=2-\mu_{n-1}$. So the gap
$\lambda_3-\lambda_2=0$. For every eigenvector $x$ belonging to $\lambda_2$,
its positive support is induced by a half-cycle, which is easily seen to be a
$c$-expander (even if weighted with the squared entries of the eigenvector) for
$c=\Theta(k^2/n^2)$.
\end{example}

The assertion of the main theorem (or of its corollary) does not remain true
without the weights, as the following example shows.

\begin{example}\label{EXA:2EXPANDERS}
Consider two isomorphic $D$-regular expanders $G_1$ and $G_2$ with $p$ nodes.
Connect two corresponding nodes $a_1$ and $a_2$ in $G_1$ and $G_2$ by a path
$P$ of length $q+1$, to get a connected graph $G$ with $n=2p+q$ nodes. We
assume that $q,p\to\infty$ and $q=o(p)$. Let $\lambda_1=0<
\lambda_2\le\dots\le\lambda_n$ be its eigenvalues of its Laplacian, with unit
length eigenvectors $v_1=(1/\sqrt{n})\one,v_2,\dots,v_n$.

It is not hard (but a little tedious) to see that this is a counterexample.
Informally, the graph is ``almost disconnected'', and hence $\lambda_2$ is
small (less than $2/p$). On the other hand, $\lambda_3-\lambda_2$ will be of
the same order as the eigenvalue gap of $G_1$, which is of constant magnitude.
The positive support of $v_2$ will consist of $G_1$ and half of the path $P$,
which is not an expander in the unweighted sense.
\end{example}

\end{document}